\documentclass[11pt]{amsproc}

\setlength{\textwidth}{15.5cm} \setlength{\textheight}{22cm}
\setlength{\oddsidemargin}{0.5cm} \setlength{\topmargin}{0cm}
\setlength{\evensidemargin}{0.5cm} \setlength{\topmargin}{0cm}
\setlength{\parindent}{0pt}

\usepackage{amsmath, amssymb, amsthm, latexsym,nicefrac,setspace}
\usepackage[pagebackref,colorlinks,linkcolor=red,citecolor=blue,urlcolor=blue,hypertexnames=true]{hyperref}
\usepackage[backrefs]{amsrefs}

\theoremstyle{plain}
\newtheorem{theorem}{Theorem}[section]

\newtheorem{lemma}[theorem]{Lemma}
\newtheorem{corollary}[theorem]{Corollary}
\newtheorem{proposition}[theorem]{Proposition}

\newtheorem{conjecture}[theorem]{Conjecture}

\theoremstyle{remark}
\newtheorem{remark}[theorem]{Remark}
\newtheorem*{theorem*}{Theorem}

\input{xy}
\xyoption{all}

\def\N{\mathbb{N}}

\DeclareMathOperator{\Aut}{Aut}

\title[Length of shortest non-trivial elements]{A new construction for the shortest non-trivial element in the lower central series}
%{On the length of the shortest non-trivial element \\ in the derived and the lower central series}

\author{Abdelrhman Elkasapy}
\address{Abdelrhman Elkasapy, Max Planck institute for Mathematics in the Sciences, Inselstra\ss e 22,
04103 Leipzig, Germany, and Mathematics Department, South Valley University, Qena, Egypt}
\email{elkasapy@mis.mpg.de}

%\author{Andreas Thom}
%\address{Andreas Thom, Univ.\ Leipzig,
%PF 100920, 04009 Leipzig , Germany}
%\email{andreas.thom@math.uni-leipzig.de}

\begin{document}

\onehalfspace

\begin{abstract}
We provide a new upper bound for the length for the shortest non-trivial element in the lower central series $\gamma_n(\mathbb{F}_2)$ of the free group on two generators. We prove that it has an asymptotic behaviour of the form $O(n^{\log_{\varphi}(2)})$, where $\varphi=1.618...$ is the golden ratio.  This new technique is used  to provide new estimates on the length of laws for finite groups and on almost laws for compact groups.
\end{abstract}

\maketitle

\tableofcontents

\section{Introduction}
This is a companion paper to
\cite{Abdu} and we will refer to it for most of the definitions and some of the results. It has been conjectured by Malestein and Putman \cite{MR2737679} that the length of the shortest non-trivial element in the derived series and the lower central series in the free group of rank 2 has an asymptotic behaviour of the form $O(n^2)$. The author and A. Thom \cite{Abdu} disproved this conjecture by some construction which provides a better upper bound. In this note we slightly improve this upper bound. The new construction suggests that this asymptotic behaviour is of the form  $O(n^{\log_{\varphi}(2)})$, where $\varphi$ is the golden ratio. The most interesting thing in this new construction is that it is enough to multiply the word length by $2$ to increase the depth of the central series by some Fibonacci number. Indeed, the lower central series steps are  $\gamma_{f_{i+2}}(\mathbb{F}_2)$ where ${(f_n)}_{n\in\mathbb{N}}$ is the sequence of Fibonacci numbers ($f_0:=0$, $f_1:= 1$, and $f_{n+2}:= f_{n+1}+f_n$ for $n\in\mathbb{N}$), see Remark \ref{rem1}. In this paper we conjecture this asymptotic behaviour $O(n^{\log_{\varphi}(2)})$ to be sharp and it is still unclear how to prove this conjecture.  However, we give some remarks on the length of some elements in this construction which may support this conjecture, see Remark \ref{rem2} and Remark \ref{rem3}. Moreover, we also improve some previous results for the length of laws for finite groups, see Corollary \ref{cor1} and Corollary \ref{cor2} and on almost laws for compact groups, see Section \ref{almost}.  
 We write and $f(n) \preceq g(n)$ if there is a constant $C$ such that
$f(n) \leq C g(Cn)$ for all $n \in \N$. The author and A. Thom \cite{Abdu}  studied the growth of the function
$\alpha(n):= \min \{ \ell(w) \mid w \in \gamma_n({\mathbb F}_2) \setminus \{e\} \} $
 where $\ell \colon {\mathbb F}_2 \to \N$  is length function with respect to the generating set $\{a,a^{-1},b,b^{-1}\}$. We can think of $\alpha(n)$  as the girth of the Cayley graph of the group ${\mathbb F}_2/\gamma_n({\mathbb F}_2)$  with respect to the image of the natural generating set of ${\mathbb F}_2$. In view of \cite[Lemma 2.1]{Abdu} we set   $\alpha := \lim_{n \to \infty} \frac{\log_2(\alpha(n))}{\log_2(n)}.$ 
The  author and A. Thom \cite{Abdu} provides upper bound for $\alpha$, that is $\alpha \leq \frac{\log_2(3+\sqrt{17}) - 1}{\log_2(1 + \sqrt{2})}= 1,4411...\ $

For more background on free groups you can see \cite{lyndonschupp} and \cite{lyndonschupp2}. %and %\cite{Dan}.

We provide a new construction of words in the lower central series which slightly improves the upper bound of $\alpha$ in Theorem \cite{Abdu}. Our main result is the following:
\begin{theorem}\label{new}
Let $\mathbb F_2$ be the free group on two generators and $(\alpha(n))_{n \in \N}, \text{and} \ \alpha$  be defined as above. We have
$$\alpha \leq \log_{\varphi}(2)= 1,440..\ $$ or equivalently
$\alpha(n) \preceq n^{\log_{\varphi}(2)+ \varepsilon}$ for all $\varepsilon>0$.
\end{theorem}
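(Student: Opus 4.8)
The plan is to produce, for each $n$, an explicit non-trivial element $w_n\in\gamma_{f_{n+2}}(\mathbb{F}_2)$ whose word length satisfies $\ell(w_n)\le C\cdot 2^n$ for a fixed constant $C$, and then to convert such a family of witnesses into the asymptotic statement. The words $w_n$ should be defined by a two-term recursion mirroring the Fibonacci recursion $f_{n+2}=f_{n+1}+f_n$: one fixes short seed words $w_0,w_1$ of bounded length lying in $\gamma_{f_2}(\mathbb{F}_2)=\gamma_1(\mathbb{F}_2)$ and $\gamma_{f_3}(\mathbb{F}_2)=\gamma_2(\mathbb{F}_2)$ respectively, and sets $w_{n+1}$ to be a commutator-type word built from $w_n$ and a length-preserving substitution applied to $w_{n-1}$, arranged so that membership in $\gamma_{f_{n+2}}(\mathbb{F}_2)$ is forced by the inclusion $[\gamma_{f_{n+1}}(\mathbb{F}_2),\gamma_{f_n}(\mathbb{F}_2)]\subseteq\gamma_{f_{n+1}+f_n}(\mathbb{F}_2)=\gamma_{f_{n+2}}(\mathbb{F}_2)$, while the length only doubles at each step. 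The essential point — and what distinguishes this from the construction in \cite{Abdu} — is that the block $w_{n-1}$ is reused from inside $w_n$, so that its length is not paid a second time; this is the phenomenon advertised after Remark~\ref{rem1}, namely that doubling the word length buys an increase of depth by a Fibonacci number rather than by a constant.

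Granting the construction, two of the three required properties are routine inductions. Membership $w_n\in\gamma_{f_{n+2}}(\mathbb{F}_2)$ follows from the standard commutator inclusions $[\gamma_i(\mathbb{F}_2),\gamma_j(\mathbb{F}_2)]\subseteq\gamma_{i+j}(\mathbb{F}_2)$ together with $f_{n+1}+f_n=f_{n+2}$, by induction on $n$. The length estimate $\ell(w_{n+1})\le 2\,\ell(w_n)$, hence $\ell(w_n)\le C\cdot 2^n$, is immediate once the recursion is written down and one uses that the substitution preserves length. The crux of the argument is the third property, $w_n\ne e$: this is where I expect essentially all the work to lie and where hidden cancellation is the real danger. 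The natural approach is to pass to the associated graded Lie algebra $\bigoplus_k \gamma_k(\mathbb{F}_2)/\gamma_{k+1}(\mathbb{F}_2)$, which is the free Lie ring on two generators over $\Z$, and to show that the image $\overline{w_n}$ in degree $f_{n+2}$ is non-zero. Since the class of a commutator of group elements maps to the Lie bracket of their classes, $\overline{w_n}$ itself satisfies a Fibonacci-type bracket recursion, and the task reduces to showing that a recursively defined iterated bracket in the free Lie ring never vanishes. I would control this by choosing the seeds and the substitution so that, with respect to a fixed ordering of Lyndon words (equivalently, via a multidegree bookkeeping), a distinguished leading term survives with coefficient $\pm1$ at every stage of the recursion; alternatively, one evaluates the whole family in a concrete matrix representation of $\mathbb{F}_2$ over a polynomial ring and tracks a single matrix entry, showing it is a non-zero polynomial by induction.

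Finally I would assemble the asymptotic estimate. From $w_n$ we obtain $\alpha(f_{n+2})\le \ell(w_n)\le C\cdot 2^n$. Using Binet's formula $f_{n+2}=\tfrac{1}{\sqrt5}\bigl(\varphi^{\,n+2}-\bar\varphi^{\,n+2}\bigr)$ with $\bar\varphi=-1/\varphi$, one has $n=\log_\varphi(f_{n+2})+O(1)$, so $\alpha(f_{n+2})\le C'\,f_{n+2}^{\log_\varphi(2)}$ and therefore $\limsup_{n\to\infty}\frac{\log_2\alpha(f_{n+2})}{\log_2 f_{n+2}}\le\log_\varphi(2)$. To upgrade this from the subsequence of Fibonacci indices to all $n$, I invoke \cite[Lemma 2.1]{Abdu}: since consecutive Fibonacci numbers satisfy $f_{n+3}/f_{n+2}\to\varphi<\infty$, every $m$ lies between two consecutive terms $f_{n+2}$ of bounded ratio, and the regularity of $\alpha(\cdot)$ furnished by that lemma propagates the bound along the subsequence to $\alpha(m)\preceq m^{\log_\varphi(2)+\varepsilon}$ for every $\varepsilon>0$, which is exactly the assertion $\alpha\le\log_\varphi(2)$.
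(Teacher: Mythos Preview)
Your strategic outline is the paper's: exhibit words with length $\approx 2^n$ and lower-central depth $\approx \varphi^n$, then read off $\alpha\le\log_\varphi 2$. But the sentence carrying all the weight---``a commutator-type word built from $w_n$ and a length-preserving substitution applied to $w_{n-1}$ \ldots\ the length only doubles at each step''---is a wish, not a construction. A commutator $[w_n,\sigma(w_{n-1})]$ has length $2\ell(w_n)+2\ell(w_{n-1})$ before cancellation, and the recursion $\ell_{n+1}=2\ell_n+2\ell_{n-1}$ grows like $(1+\sqrt3)^n$, not $2^n$; feeding that into your own assembly would give only $\alpha\le\log_\varphi(1+\sqrt3)\approx 2.09$, worse than the $1.44\ldots$ already obtained in \cite{Abdu}. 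Your phrase ``the block $w_{n-1}$ is reused from inside $w_n$, so that its length is not paid a second time'' pinpoints exactly where the idea has to go, but you have not supplied any mechanism realising it.

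The paper's mechanism is a one-line commutator identity together with a \emph{coupled} recursion. One runs two sequences simultaneously, $a_n:=a_{n-1}b_{n-1}$ and $b_n:=a_{n-1}^{-1}b_{n-1}^{-1}$ (seeds $a_0=b^{-1}$, $b_0=aba^{-1}$), so that length at most doubles by design. Then $a_{n+2}=a_{n+1}b_{n+1}=[a_n,b_n]$ automatically, and the elementary identity $[w_1,w_2]=[w_1w_2,w_2]$ (Lemma~\ref{depth}) rewrites this as $[a_{n+1},b_n]$, forcing $\gamma(a_{n+2})\ge\gamma(a_{n+1})+\gamma(b_n)$. A short automorphism check (Lemma~\ref{lem:rel a_n b_n}) gives $\gamma(b_n)=\gamma(a_n)$, and the Fibonacci recursion for the depth follows. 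That identity is precisely the ``reuse'' you were reaching for, and it is the whole point of the construction. As for non-triviality, your graded-Lie-algebra or matrix plan is much heavier than needed and would be delicate to execute along a recursion you have not actually written down; once the construction is explicit, the paper simply tracks the first and last letters of the reduced words $a_n,b_n$ through a three-case induction modulo $3$ (Lemma~\ref{lem:length a_n b_n}), so $a_n\ne e$ drops out together with exact length formulas. Your final assembly paragraph is fine and matches the paper.
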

We state this conjecture: %that $\alpha={(\log_2 \varphi)}^{-1}$. 
\begin{conjecture}\label{conj}
$\alpha= \log_{\varphi}(2)$.
\end{conjecture}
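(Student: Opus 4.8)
Since the inequality $\alpha\le\log_\varphi(2)$ is exactly the content of Theorem~\ref{new}, establishing Conjecture~\ref{conj} is the same as proving the matching lower bound $\alpha\ge\log_\varphi(2)$, i.e. $\alpha(n)\succeq n^{\log_\varphi(2)-\varepsilon}$ for every $\varepsilon>0$. It is convenient to phrase this through the Magnus embedding $\mu\colon\mathbb F_2\hookrightarrow\Z\iip{X,Y}$, $a\mapsto 1+X$, $b\mapsto 1+Y$: for non-trivial $w$ put $\mathrm{depth}(w):=\max\{n:w\in\gamma_n(\mathbb F_2)\}$, which by the Magnus--Witt theorem equals the lowest degree occurring in $\mu(w)-1$, and let $\pi(w)\in L_{\mathrm{depth}(w)}$ be the corresponding leading homogeneous Lie element in the free Lie ring $L=\bigoplus_m L_m$ on $X,Y$. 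In this language the assertion to be proved is
\[
\mathrm{depth}(w)\ \preceq\ \ell(w)^{\log_2(\varphi)+\varepsilon}\qquad(\varepsilon>0),
\]
which says precisely that the Fibonacci construction behind Theorem~\ref{new} and Remark~\ref{rem1} is asymptotically optimal: no family of non-trivial words can gain more than (asymptotically) a factor $\varphi$ in central depth per doubling of word length.

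The plan is an induction on $\ell(w)$ that controls $\pi(w)$. Only two operations build a word from shorter ones, and under each the leading term is easy to follow: under multiplication it is additive, so cancellation between equal leading degrees can only \emph{raise} the depth; under commutation $\pi([u,v])=[\pi(u),\pi(v)]$ whenever this bracket is non-zero, with $\mathrm{depth}([u,v])=\mathrm{depth}(u)+\mathrm{depth}(v)$ and $\ell([u,v])=2\ell(u)+2\ell(v)$. The crux would then be a normal-form lemma: after cyclic reduction a non-trivial $w$ either has small depth for an elementary reason — its image is already non-trivial in a fixed nilpotent quotient of bounded class, detected by a low-degree coefficient of $\mu(w)$ — or $w$ can be rewritten, at the cost of a controlled length blow-up, so that $\pi(w)$ is, up to sign, the bracket $[\pi(w_1),\pi(w_2)]$ of the leading terms of two non-trivial words with $\ell(w_1)\le\tfrac12\ell(w)$ and $\ell(w_2)\le\tfrac14\ell(w)$ — the split being realisable within length $\ell(w)$, just as in the construction, precisely because $\tfrac12+\tfrac14<1$ leaves room for the commutator overhead. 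Iterating this yields, for $d(k):=\max\{\mathrm{depth}(w):w\neq e,\ \ell(w)\le k\}$, a recursion $d(k)\le d(k/2)+d(k/4)+O(1)$, the same one obeyed by the construction; its characteristic equation $2^{-\beta}+2^{-2\beta}=1$ has root $\beta=\log_2(\varphi)$, since $\tfrac1\varphi+\tfrac1{\varphi^2}=1$, which gives the claimed bound.

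The main obstacle — and the reason the conjecture is open — is ruling out unexpectedly efficient cancellation: a long word which is a complicated product with no visible commutator tree might, through massive cancellation in $\mu(w)$, land in $\gamma_n(\mathbb F_2)$ for some $n$ far exceeding $\ell(w)^{\log_2\varphi}$, and no present technique forbids this. What is really needed is a quantitative version of the Magnus--Witt correspondence linking the length filtration to the lower-central filtration: a lower bound on the size of the degree-$n$ coefficients of $\mu(w)$ (equivalently, on the Hall-basis coordinates of $\pi(w)$) in terms of $n$ and $\ell(w)$, strong enough to force some coefficient of degree $\preceq\ell(w)^{\log_2\varphi}$ to be non-zero. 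The explicit computations in Remarks~\ref{rem2} and~\ref{rem3} show only that the particular words of this construction cannot be improved locally; the programme above would try to promote this local optimality to a global extremal statement over all commutator trees, and it is exactly the step excluding non-tree-like cancellation that I expect to resist.

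An orthogonal route is to produce, for each $n$, an explicit nilpotent quotient $Q_n$ of $\mathbb F_2$ of class $n$ whose Cayley-graph girth admits the lower bound $n^{\log_\varphi(2)-\varepsilon}$ — natural test families being Sylow subgroups of symmetric or finite linear groups and the iterated wreath products $\Z_p\wr\cdots\wr\Z_p$ — and then, through the dictionary between $\alpha(n)$ and the length of laws and almost laws developed in Section~\ref{almost}, to read off the bound from a sufficiently sharp non-existence result for short laws of such a family. A single family attaining the Fibonacci rate would settle Conjecture~\ref{conj}; but exhibiting one appears to demand essentially the same new input as the direct approach, which is why I regard this as the genuinely hard part of the problem.
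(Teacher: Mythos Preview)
The statement you were asked to prove is stated in the paper as an open \emph{conjecture}, not a theorem: the paper explicitly says ``it is still unclear how to prove this conjecture'' and offers only the heuristic evidence of Remarks~\ref{rem2} and~\ref{rem3}. There is therefore no proof in the paper to compare your attempt against.

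Your write-up is not a proof either, and to your credit you say so plainly: you outline a plausible inductive strategy via the Magnus embedding and a hoped-for normal-form lemma, and then correctly identify the genuine obstruction --- uncontrolled cancellation in $\mu(w)$ that could push the depth far beyond what any commutator-tree decomposition predicts. The recursion $d(k)\le d(k/2)+d(k/4)+O(1)$ you aim for is exactly the Fibonacci recursion underlying the paper's construction, so the heuristic is well-motivated; but the ``normal-form lemma'' asserting that every short word of large depth admits such a balanced commutator splitting is precisely the missing ingredient, and nothing in your outline (or in the paper) supplies it. Your alternative route through explicit nilpotent quotients with large girth is also reasonable as a programme, and again you correctly note that no known family achieves the required rate.

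In short: the paper does not prove Conjecture~\ref{conj}, your proposal does not prove it either, and you have accurately diagnosed why --- the step from ``the Fibonacci construction is locally optimal'' to ``no word can do better globally'' requires a quantitative lower bound on Magnus coefficients (or an equivalent girth bound) that is currently unavailable.
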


\section{The new construction} 
\label{construction}
%Recall that $\mathbb{F}_2$ is the free group over the set $\{a,b\}$.
\begin{lemma}\label{depth}
For $w_1, w_2  \in \mathbb{F}_2, \text{and} \ n \in \mathbb{Z},$ we have
\begin{equation}\label{dep}
[w_1,w_2]=[w_1w_2^n,w_2] \ \ \text{and} \ \ [w_1,w_2]=[w_1,w_2w_1^n].
\end{equation}
\end{lemma}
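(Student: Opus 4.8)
The plan is to verify both identities by a one-line computation inside $\mathbb{F}_2$, using nothing beyond the definition of the commutator together with the trivial fact that a group element commutes with all of its own powers. I will work with the convention $[x,y] = xyx^{-1}y^{-1}$ (with the other common convention the argument is the same up to relabelling, and one simply inserts the extra power on the appropriate side).

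For the first identity, I would expand
\begin{equation*}
[w_1 w_2^n, w_2] = (w_1 w_2^n)\, w_2\, (w_1 w_2^n)^{-1}\, w_2^{-1} = w_1\,\bigl(w_2^{\,n}\, w_2\, w_2^{-n}\bigr)\, w_1^{-1}\, w_2^{-1} = w_1\, w_2\, w_1^{-1}\, w_2^{-1} = [w_1,w_2],
\end{equation*}
where the parenthesised block collapses to $w_2$ because $w_2^{\,n}$ commutes with $w_2$. The second identity is proved symmetrically:
\begin{equation*}
[w_1, w_2 w_1^n] = w_1\,(w_2 w_1^n)\, w_1^{-1}\, (w_2 w_1^n)^{-1} = w_1\, w_2\,\bigl(w_1^{\,n}\, w_1^{-1}\, w_1^{-n}\bigr)\, w_2^{-1} = w_1\, w_2\, w_1^{-1}\, w_2^{-1} = [w_1,w_2].
\end{equation*}
Both computations go through verbatim for every $n \in \mathbb{Z}$; for $n<0$ one just reads $w^{\,n} = (w^{-1})^{-n}$, which does not affect the cancellation.

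Equivalently — and this is the conceptually cleaner route — one may quote the standard commutator identities $[xz,y] = {}^{x}[z,y]\cdot[x,y]$ and $[x,yz] = [x,y]\cdot{}^{y}[x,z]$ (where ${}^{g}h := ghg^{-1}$) and then observe that $[w_2^{\,n},w_2]=e$ and $[w_1,w_1^{\,n}]=e$, whereupon each identity is immediate. There is no genuine obstacle here: the statement is an elementary word identity, valid already in the free group and hence in every group, so the only point requiring attention is to fix the commutator convention once and for all and to keep it consistent with how commutators are written in the companion paper \cite{Abdu} and in the sequel.
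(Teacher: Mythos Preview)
Your proof is correct and matches the paper's own argument essentially line for line: both simply expand $[w_1w_2^n,w_2]$ using the definition $[x,y]=xyx^{-1}y^{-1}$, cancel $w_2^{n}w_2w_2^{-n}$ to $w_2$, and then remark that the second identity is handled the same way. Your additional paragraph invoking the multiplicative commutator identities is a harmless alternative, but the direct computation is already the paper's route.
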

\begin{proof} We compute:
\begin{eqnarray*}
[w_1w_2^n,w_2] = w_1w_2^nw_2w_2^{-n}w_1^{-1}w_2^{-1} 
=w_1w_2w_1^{-1}w_2^{-1} 
= [w_1,w_2].
\end{eqnarray*}
In the same way we show that $[w_1,w_2]=[w_1,w_2w_1^n]$ and this proves the claim.
\end{proof}

%Recall that we consider ${\mathbb F}_2$ to be generated by letters $a$ and $b$.
We set $a_{0}:=b^{-1}, \ \ b_0:=aba^{-1}$ and define recursively
$$a_{n} := a_{n-1}b_{n-1}, \quad b_{n}:=a_{n-1}^{-1}b_{n-1}^{-1}, \quad \text{for all} \ n \in \N.$$ 
%$$w_n=[a_n,b_n].$$

\begin{lemma}[length of $a_n$ and $b_n$]\label{lem:length a_n b_n}
	Let $n>0$. Then $a_n$ and $b_n$ have the following reduced representations:
	$$
		a_n=
		\begin{cases}
			b^{-1}\cdots b^{-1} &: n\equiv 0 \mod 3\\
			b^{-1}\cdots b a^{-1} &: n\equiv 1 \mod 3\\
			b^{-1}\cdots b^{-1}a^{-1} &: n\equiv 2 \mod 3
		\end{cases}
		\quad \text{and} \quad
		b_n=
		\begin{cases}
			a b\cdots b a^{-1} &: n\equiv 0 \mod 3\\
			b\cdots b^{-1}a^{-1} &: n\equiv 1 \mod 3\\
			a b^{-1}\cdots b^{-1} &: n\equiv 2 \mod 3
		\end{cases}.
	$$
	Thus, there is no cancellation in the product $a_n^{-1} b_n^{-1}$ for all $n\in\mathbb{N}$. The product $a_n b_n$ involves cancellation if an only if $n\equiv 2\mod 3$, where the term $a^{-1}a$ cancels out. Hence, for $n\in\mathbb{N}$ we have
	$$
		\ell(a_{n+1})=\ell(a_n)+\ell(b_n)-
		\begin{cases}
			2 &: n\equiv 2 \mod 3\\
			0 &: \text{otherwise}
		\end{cases} 
		\quad \text{and} \quad 
		\ell(b_{n+1})=\ell(a_n)+\ell(b_n).
	$$
	Solving the recursion yields
	$$
		\ell(a_n)=
		\begin{cases}
			\frac{13\cdot 2^n-6}{7} &: n\equiv 0\mod 3\\
			\frac{13\cdot 2^n+2}{7} &: n\equiv 1\mod 3\\
			\frac{13\cdot 2^n+4}{7} &: n\equiv 2\mod 3
		\end{cases}
		\quad \text{and} \quad
		\ell(b_n)=
		\begin{cases}
			\frac{13\cdot 2^n+8}{7} &: n\equiv 0\mod 3\\
			\frac{13\cdot 2^n+2}{7} &: n\equiv 1\mod 3\\
			\frac{13\cdot 2^n+4}{7} &: n\equiv 2\mod 3
		\end{cases}.
	$$
	 In particular, there exists a constant $C'>0$, such that $\ell(a_n) \leq C' \cdot 2^n$ for all $n$.
\end{lemma}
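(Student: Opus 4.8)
The plan is to establish the list of reduced forms by induction on $n$ and to read off everything else from it. The key observation is that the six displayed patterns record \emph{exactly} the data the induction needs --- the first one or two letters and the last one or two letters of $a_n$ and of $b_n$ --- so the induction hypothesis can be taken to be precisely these patterns, with no auxiliary bookkeeping. For the base case one writes out $a_1=b^{-1}aba^{-1}$ and $b_1=bab^{-1}a^{-1}$ directly (the degenerate cases $a_0=b^{-1}$, $b_0=aba^{-1}$ being immediate); in particular $\ell(a_n),\ell(b_n)\ge 4$ for $n\ge 1$, which will guarantee that the single cancellation occurring below never reaches the opposite end of a word.

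For the inductive step, assume the patterns hold for $n$. Since $a_n$ and $b_n$ are reduced, the only cancellation possible in $a_nb_n$ or in $a_n^{-1}b_n^{-1}$ is at the junction and propagates inward from there. In $a_n^{-1}b_n^{-1}$ the junction is the inverse of the first letter of $a_n$ followed by the inverse of the last letter of $b_n$; the first letter of $a_n$ is $b^{-1}$ in all three residue classes, whereas the last letter of $b_n$ is $a^{-1}$, $a^{-1}$, $b^{-1}$ respectively, so the junction is reduced and hence so is the whole product --- this is the ``no cancellation in $a_n^{-1}b_n^{-1}$'' clause. In $a_nb_n$ the junction is the last letter of $a_n$ followed by the first letter of $b_n$, which in the three cases is $b^{-1}a$, $a^{-1}b$, or $a^{-1}a$; hence a cancellation occurs precisely when $n\equiv 2\mod 3$, it is the pair $a^{-1}a$ that disappears, and then the two letters brought into contact are the second-to-last letter of $a_n$ and the second letter of $b_n$, both equal to $b^{-1}$, so nothing further cancels. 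It then remains, in each of these cases, to read off the boundary letters of $a_{n+1}=a_nb_n$ and $b_{n+1}=a_n^{-1}b_n^{-1}$ from those of $a_n,b_n$ (using $\ell(w^{-1})=\ell(w)$ and the length bound, so that removing the one cancelled pair leaves the far end untouched) and to check agreement with the patterns for $n+1$. This six-way case distinction is the only laborious part of the argument and is where I expect the real work to lie, although each individual check is a one-line comparison of letters.

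Granting the combinatorial picture, the recursions $\ell(a_{n+1})=\ell(a_n)+\ell(b_n)$ for $n\not\equiv 2\mod 3$, $\ell(a_{n+1})=\ell(a_n)+\ell(b_n)-2$ for $n\equiv 2\mod 3$, and $\ell(b_{n+1})=\ell(a_n)+\ell(b_n)$ are immediate. To solve them, subtracting shows $\ell(b_{n+1})-\ell(a_{n+1})$ is $2$ when $n\equiv 2\mod 3$ and $0$ otherwise, hence $\ell(b_n)=\ell(a_n)+2$ when $n\equiv 0\mod 3$ and $\ell(b_n)=\ell(a_n)$ otherwise; feeding this back gives $\ell(a_{n+1})=2\ell(a_n)$, corrected by $+2$ when $n\equiv 0\mod 3$ and by $-2$ when $n\equiv 2\mod 3$. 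Composing the three steps from $3k$ to $3k+3$ then yields $\ell(a_{3k+3})=8\,\ell(a_{3k})+6$, and summing the resulting geometric series with $\ell(a_0)=1$ gives $\ell(a_{3k})=\frac{13\cdot 2^{3k}-6}{7}$; one and two further applications of the step relation produce the values at $3k+1$ and $3k+2$, and the formulas for $\ell(b_n)$ follow from $\ell(b_n)=\ell(a_n)+2$ or $\ell(b_n)=\ell(a_n)$ as above. Finally, a glance at the three formulas for $\ell(a_n)$ shows $\ell(a_n)\le 3\cdot 2^n$ for every $n$, which is the last assertion.
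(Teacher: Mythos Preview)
Your proof is correct and follows essentially the same approach as the paper: an induction on $n$ for the boundary-letter patterns of the reduced forms, from which the cancellation behaviour and the length recursions are read off, and then a resolution of the recursion. The only cosmetic difference is that you actually \emph{derive} the closed formulas via the three-step composition $\ell(a_{3k+3})=8\,\ell(a_{3k})+6$, whereas the paper simply verifies the stated formulas by induction.
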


\begin{proof}
At first we mention why it is enough to show the statement about the reduced representations fo $a_n$ and $b_n$ (for $n>0$).

Namely, from these formulas it follows directly that there is no cancellation in the product $a_n^{-1}b_n^{-1}$ and the described cancellation in $a_n b_n$ (for $n=0$ one checks this explicitly). From this we deduce the stated recursion formulas for $\ell(a_n)$ and $\ell(b_n)$. A straightforward induction shows the correctness of the given explicit formulas for $\ell(a_n)$ and $\ell(b_n)$.

Finally, let us prove the statement about the reduced representations of $a_n$ and $b_n$ by induction on $n\in\mathbb{N}_{>0}$. For $n=1$ we have $a_1=b^{-1}a b a^{-1}$ and $b_1=b a b^{-1}a^{-1}$, so the claim is true. Now let us do one step of the induction, e.g.~the case $n\equiv 0 \mod 3$.
Then $a_{n+1}=b^{-1}\cdots b^{-1} a b \cdots b a^{-1}=b^{-1}\cdots b a^{-1}$ and $b_{n+1}=b\cdots b a b^{-1}\cdots b^{-1} a^{-1}=b\cdots b^{-1} a^{-1}$ as claimed, since $n+1\equiv 1 \mod 3$. The cases $n\equiv 1,2 \mod 3$ are treated analogously.
\end{proof}

We set 
\begin{equation} 
\gamma(w):=\max\{n \mid w \in \gamma_n({\mathbb F}_2)\}. \quad \forall w \in \mathbb F_2.
\end{equation} Clearly, 
%\begin{equation}
%\gamma(w_1w_2) \geq \min\{\gamma(w_1),\gamma(w_2)\} \ \text{and} \
%\end{equation} 
\begin{equation} \label{eq}
\gamma([w_1,w_2]) \geq \gamma(w_1) + \gamma(w_2),
\end{equation}
and
\begin{equation}\label{alpha}
\alpha(\gamma(a_n)) \leq \ell(a_n) \ \text{for all} \  n \in \N.
\end{equation}
%-----------

\begin{lemma}\label{lem:rel a_n b_n}
	Let $\sigma,\tau\in \Aut(\mathbb{F}_2)$ be defined by $\sigma:a\mapsto a^{-1}, b\mapsto b^{-1}$ and $\tau:a\mapsto a, b\mapsto b^{-1}$.
	Let $n\in\mathbb{N}$.
	\begin{enumerate}
		\item If $n\equiv 0 \mod 3$ then $a \sigma(a_n) a^{-1}=b_n$ (and so $a_n=\sigma^{-1}(a^{-1} b_n a)=\sigma(a^{-1} b_n a)=a\sigma(b_n)a^{-1}$).
	\item If $n\equiv 1 \mod 3$ then $\tau(a_n)=b_n$ (and so $\tau(b_n)=\tau^2(a_n)=a_n$).
		\item If $n\equiv 2 \mod 3$ then $\tau(a_n)=b_n^{-1}$.
	\end{enumerate} 
\end{lemma}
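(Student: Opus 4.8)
The plan is to verify each of the three identities by direct computation, using the recursive definitions $a_n=a_{n-1}b_{n-1}$, $b_n=a_{n-1}^{-1}b_{n-1}^{-1}$, together with the base cases $a_0=b^{-1}$, $b_0=aba^{-1}$, and then to propagate the relations by induction. The key observation is that the two automorphisms $\sigma$ and $\tau$ essentially permute among themselves the ``building blocks'' $a_n^{\pm 1},b_n^{\pm 1}$, and since $a_{n+1},b_{n+1}$ are words in $a_n,b_n$, applying $\sigma$ or $\tau$ commutes with the recursion in a controlled way.

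\medskip

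First I would check the base case. For $n=0$ we have $\sigma(a_0)=\sigma(b^{-1})=b=a^{-1}(aba^{-1})a=a^{-1}b_0 a$, which rearranges to $a\sigma(a_0)a^{-1}=b_0$, giving statement (1) for $n=0$; the parenthetical reformulations follow since $\sigma^2=\id$ and $\sigma(a)=a^{-1}$. For the inductive step I would treat the three residue classes mod $3$ in turn, each one feeding into the next. Suppose $n\equiv 0\bmod 3$ and $a\sigma(a_n)a^{-1}=b_n$, equivalently $\sigma(a_n)=a^{-1}b_n a$ and (by the remark) $\sigma(b_n)=a^{-1}a_n a$. Then
$$
\tau(a_{n+1}) = \tau(a_n b_n)=\tau(a_n)\tau(b_n),
$$
and I must relate $\tau(a_n),\tau(b_n)$ to $\sigma(a_n),\sigma(b_n)$. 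Here I would use Lemma \ref{lem:length a_n b_n}: for $n\equiv 0\bmod 3$ the reduced word $a_n$ begins and ends with $b^{-1}$ and involves only the letters $a^{\pm1},b^{\pm1}$ with $a$ appearing only in the interior, and in fact the precise shape of the reduced words forces the relation $\tau(a_n)=\sigma(a_n)$ or a similarly explicit identity; comparing with $\sigma(a_n)=a^{-1}b_na$ then yields $\tau(a_{n+1})=b_{n+1}$, which is statement (2) since $n+1\equiv 1\bmod 3$. The cases $n\equiv 1\bmod 3$ (deriving (3) from (2)) and $n\equiv 2\bmod 3$ (deriving (1) from (3)) are handled the same way, in each case expanding $a_{n+1}=a_nb_n$, $b_{n+1}=a_n^{-1}b_n^{-1}$, applying the relevant automorphism, and using the reduced-word description from Lemma \ref{lem:length a_n b_n} to identify the result.

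\medskip

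I expect the main obstacle to be bookkeeping rather than any conceptual difficulty: one must be careful about the interaction between $\sigma$, $\tau$, and conjugation by $a$, since $\sigma$ and $\tau$ do not commute and conjugation by $a$ only appears in the $n\equiv 0$ case. In particular, in passing from statement (1) at level $n$ (which involves a conjugation by $a$) to statement (2) at level $n+1$ (which does not), the conjugating factors $a$ and $a^{-1}$ must cancel when the two halves $\tau(a_n)$ and $\tau(b_n)$ are concatenated; verifying this cancellation is exactly where the explicit reduced forms of $a_n$ and $b_n$ from Lemma \ref{lem:length a_n b_n} are needed, and it is the one place where a sign or residue-class error would be easy to make. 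A clean way to organize the argument is to first record, for each residue class mod $3$, the exact effect of $\tau$ on $a_n$ (e.g.\ whether $\tau(a_n)$ equals $b_n$, $b_n^{-1}$, or a conjugate thereof) directly from the reduced-word shapes, and only then assemble the three implications of the lemma.
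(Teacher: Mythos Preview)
Your induction-by-steps-of-$1$ plan has a real gap at the cross-over steps (from residue $0$ to residue $1$, and from residue $2$ back to residue $0$). In those steps you must pass between the two \emph{different} automorphisms $\sigma$ and $\tau$, and you propose to do this via the reduced-word information in Lemma~\ref{lem:length a_n b_n}. But that lemma records only the first and last letters of $a_n$ and $b_n$; it says nothing about the interior, so it cannot tell you how $\tau$ acts on $a_n$ versus how $\sigma$ acts. In particular the tentative relation ``$\tau(a_n)=\sigma(a_n)$'' you float for $n\equiv 0$ is false as soon as $a_n$ contains a letter $a^{\pm1}$ (already for $n=3$), and there is no ``similarly explicit identity'' one can read off from first/last letters alone. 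Only the step $(2)\Rightarrow(3)$ stays within the single automorphism $\tau$ and actually goes through cleanly from the recursion: if $\tau(a_{n-1})=b_{n-1}$ and $\tau(b_{n-1})=a_{n-1}$, then $\tau(a_n)=\tau(a_{n-1}b_{n-1})=b_{n-1}a_{n-1}=b_n^{-1}$.

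The paper sidesteps the $\sigma$/$\tau$ interaction entirely by inducting in steps of $3$. The point is the closed formula
\[
a_{n+3}=[a_n b_n,\,a_n^{-1}b_n^{-1}],\qquad b_{n+3}=[b_n a_n,\,b_n^{-1}a_n^{-1}],
\]
which one checks directly from the recursion; the right-hand sides are obtained from each other by swapping $a_n\leftrightarrow b_n$. Hence if $\eta$ denotes $a\sigma(\cdot)a^{-1}$ (case $n\equiv 0$) or $\tau$ (case $n\equiv 1$), and the induction hypothesis gives $\eta(a_n)=b_n$, $\eta(b_n)=a_n$, then immediately $\eta(a_{n+3})=b_{n+3}$. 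This keeps each residue class with its own automorphism and never needs to compare $\sigma$ with $\tau$; statement~(3) is then derived from statement~(2) by the one-step argument above.
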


\begin{proof}
	At first we prove statements (1) and (2) by induction on $n$. 
	For $n=0$ we have $a_0=b^{-1}\stackrel{a\sigma(\cdot)a^{-1}}{\mapsto} ab a^{-1}=b_0$, and for $n=1$ we have $a_1=b^{-1}aba^{-1}\stackrel{\tau}{\mapsto}bab^{-1} a^{-1}$ verifying the claim. Now, let $\eta=a\sigma(\cdot)a^{-1}$ if $n\equiv 0\mod 3$ and $\eta=\tau$ if $n\equiv 1\mod 3$. By definition it holds for $n\in\mathbb{N}$ that
	$$
	a_{n+3}=[a_n b_n, a_n^{-1} b_n^{-1}], 
	\quad \text{and} \quad 
	b_{n+3}=[b_n a_n, b_n^{-1} a_n^{-1}].
	$$
	Thus, the induction hypothesis gives us $b_{n+3}=[\eta(a_n)\eta(b_n),{\eta(a_n)}^{-1}{\eta(b_n)}^{-1}]=\eta([a_n b_n,a_n^{-1} b_n^{-1}])=\eta(a_{n+3})$.
	
	At last we show statement (3). Let $n\equiv 2\mod 3$. Then, by statement (2) and by definition we obtain $b_n^{-1}=b_{n-1}a_{n-1}=\tau(a_{n-1})\tau(b_{n-2})=\tau(a_{n-1}b_{n-1})=\tau(a_n)$.
\end{proof}

An immediate corollary of the last lemma is

\begin{corollary}\label{cor:gamma(a_n)=gamma(b_n)}
	We have $\gamma(a_n)=\gamma(b_n)$.
\end{corollary}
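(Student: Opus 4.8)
The statement to prove is Corollary \ref{cor:gamma(a_n)=gamma(b_n)}, namely $\gamma(a_n)=\gamma(b_n)$, and it is explicitly flagged as ``an immediate corollary of the last lemma'', so the plan is essentially to unwind Lemma \ref{lem:rel a_n b_n} case by case modulo $3$. The key observation I would use is that $\gamma$ is invariant under automorphisms of $\mathbb{F}_2$: since each $\gamma_k(\mathbb{F}_2)$ is a characteristic subgroup, any $\varphi\in\Aut(\mathbb{F}_2)$ satisfies $w\in\gamma_k(\mathbb{F}_2)\iff \varphi(w)\in\gamma_k(\mathbb{F}_2)$, hence $\gamma(\varphi(w))=\gamma(w)$ for all $w$. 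I would also record the trivial fact $\gamma(w^{-1})=\gamma(w)$, which follows because each $\gamma_k(\mathbb{F}_2)$ is a subgroup. These two remarks together do all the work.

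With that in hand the proof is a three-way split. If $n\equiv 1\pmod 3$, then $b_n=\tau(a_n)$ by Lemma \ref{lem:rel a_n b_n}(2), and since $\tau\in\Aut(\mathbb{F}_2)$ we get $\gamma(b_n)=\gamma(\tau(a_n))=\gamma(a_n)$. If $n\equiv 2\pmod 3$, then $\tau(a_n)=b_n^{-1}$ by part (3), so $\gamma(a_n)=\gamma(\tau(a_n))=\gamma(b_n^{-1})=\gamma(b_n)$. If $n\equiv 0\pmod 3$, then $b_n=a\sigma(a_n)a^{-1}$ by part (1); the map $w\mapsto a\sigma(w)a^{-1}$ is the composition of the automorphism $\sigma$ with conjugation by $a$, hence is itself an automorphism of $\mathbb{F}_2$, so again $\gamma(b_n)=\gamma(a_n)$. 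This exhausts all residues, so $\gamma(a_n)=\gamma(b_n)$ for every $n\in\mathbb{N}$.

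There is essentially no obstacle here; the only thing to be careful about is that the definition $\gamma(w)=\max\{k\mid w\in\gamma_k(\mathbb{F}_2)\}$ is only literally a maximum when $w\neq e$ (for $w=e$ one sets $\gamma(e)=\infty$ by convention), but this edge case is harmless since $\gamma(e)=\gamma(e)$ trivially, and in any event the interesting range is $w\neq e$. One could also phrase the whole argument in one line by noting that $a_n$ and $b_n$ always lie in the same orbit under $\langle\sigma,\tau,\text{inner automorphisms}\rangle\leq\Aut(\mathbb{F}_2)$, and that $\gamma$ is constant on $\Aut(\mathbb{F}_2)$-orbits. I would present the case analysis explicitly since it mirrors the structure of Lemma \ref{lem:rel a_n b_n} and keeps the corollary genuinely self-contained.
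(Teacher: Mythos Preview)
Your proof is correct and follows the same approach as the paper: both rely on the fact that each $\gamma_k(\mathbb{F}_2)$ is characteristic, so that the relations in Lemma~\ref{lem:rel a_n b_n} (which express $b_n$ as the image of $a_n$ under some automorphism, possibly followed by inversion) force $a_n$ and $b_n$ to lie in exactly the same terms of the lower central series. The paper compresses this into a single sentence about characteristic subgroups, while you spell out the three residue classes explicitly, but the underlying argument is identical.
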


\begin{proof}
    By Lemma~\ref{lem:rel a_n b_n} we get for a characteristic subgroup $G\subseteq \mathbb{F}_2$ that $a_n\in G$ if
    and only if $b_n\in G$.
\end{proof}

%------------

In the following lemma, we estimate $\gamma(a_n).$

\begin{lemma} \label{lower}
We have $\gamma(a_{n+2}) \geq  \gamma(a_{n+1}) + \gamma(a_{n})$ for all $n \in \N$. In particular, there exists a constant $C>0$, such that
$\gamma(a_n) \geq C \cdot \varphi^n$, where $\varphi=1.618...$ is the golden ratio.
\end{lemma}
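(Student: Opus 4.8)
The plan is to establish the recursive inequality $\gamma(a_{n+2}) \geq \gamma(a_{n+1}) + \gamma(a_n)$ directly, and then deduce the exponential lower bound by comparison with the Fibonacci recursion. The key observation is that, by Lemma~\ref{lem:rel a_n b_n}, the elements $a_n$ and $b_n$ differ only by applying an automorphism of $\mathbb{F}_2$ (conjugation by $a$ composed with $\sigma$, or $\tau$, or $\tau$ followed by inversion, depending on $n \bmod 3$), so in all cases $\gamma(b_n) = \gamma(a_n)$ by Corollary~\ref{cor:gamma(a_n)=gamma(b_n)}. The definition of the recursion gives $a_{n+1} = a_n b_n$ and $b_{n+1} = a_n^{-1} b_n^{-1}$; one step further, $a_{n+2} = a_{n+1} b_{n+1} = (a_n b_n)(a_n^{-1} b_n^{-1})$, which up to reordering the last two factors is essentially the commutator $[a_n, b_n]$. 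Here is where Lemma~\ref{depth} enters: the identities $[w_1, w_2] = [w_1 w_2^n, w_2]$ let us absorb extra factors, so that $a_{n+2}$ can be rewritten (exactly, as elements of $\mathbb{F}_2$) in the form $[a_n, b_n]$ or a conjugate/variant thereof.

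The heart of the argument is then to apply the standard commutator inclusion: if $x \in \gamma_j(\mathbb{F}_2)$ and $y \in \gamma_k(\mathbb{F}_2)$ then $[x,y] \in \gamma_{j+k}(\mathbb{F}_2)$, which is exactly inequality~\eqref{eq} in the excerpt, $\gamma([w_1,w_2]) \geq \gamma(w_1) + \gamma(w_2)$. Since $a_n \in \gamma_{\gamma(a_n)}(\mathbb{F}_2)$ and $b_n \in \gamma_{\gamma(b_n)}(\mathbb{F}_2) = \gamma_{\gamma(a_n)}(\mathbb{F}_2)$, we would get $\gamma([a_n, b_n]) \geq \gamma(a_n) + \gamma(b_n) = 2\gamma(a_n)$. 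But this gives the wrong recursion — it would yield $\gamma(a_{n+1}) \geq 2\gamma(a_n)$ and hence only exponential growth of base $2$, not the Fibonacci recursion claimed. So the correct bookkeeping must instead express $a_{n+2}$ as a commutator of the form $[a_{n+1}, \text{something built from } a_n]$, or more precisely use that $a_{n+2}$ is a commutator of two elements whose $\gamma$-values are $\gamma(a_{n+1})$ and $\gamma(a_n)$ respectively. Concretely: $a_{n+2} = a_{n+1} b_{n+1}$, and by Lemma~\ref{depth} applied appropriately one rewrites this as a commutator $[u,v]$ where (up to the automorphisms of Lemma~\ref{lem:rel a_n b_n}) $u$ lies in the same characteristic level as $a_{n+1}$ and $v$ in the same level as $a_n$; then \eqref{eq} gives $\gamma(a_{n+2}) = \gamma([u,v]) \geq \gamma(u) + \gamma(v) = \gamma(a_{n+1}) + \gamma(a_n)$.

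For the second assertion, once the recursion $\gamma(a_{n+2}) \geq \gamma(a_{n+1}) + \gamma(a_n)$ is in hand, I would argue by induction that $\gamma(a_n) \geq c \cdot f_n$ for a suitable constant $c > 0$, where $(f_n)$ are the Fibonacci numbers: the base cases $n=0,1$ are checked by hand (one verifies $a_0, a_1$ are non-trivial, giving $\gamma(a_0), \gamma(a_1) \geq 1$), and the inductive step is immediate from the recursion since $f_{n+2} = f_{n+1} + f_n$. Finally, the closed form $f_n = \frac{\varphi^n - (-\varphi)^{-n}}{\sqrt 5}$ (Binet) gives $f_n \geq C' \varphi^n$ for large $n$ and a constant $C' > 0$, so $\gamma(a_n) \geq C \varphi^n$ with $C = cC'$ after absorbing the finitely many small cases into the constant. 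The main obstacle is the first part: getting the \emph{correct} decomposition of $a_{n+2}$ as a commutator witnessing the Fibonacci recursion rather than the naive "doubling" decomposition — this is exactly where the interplay between the defining recursion $a_{n+1} = a_n b_n$, the cancellation identities of Lemma~\ref{depth}, and the symmetry relations of Lemma~\ref{lem:rel a_n b_n} must be combined carefully.
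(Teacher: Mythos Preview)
Your approach is exactly the paper's, but you leave the central computation unfinished when it is in fact a one-liner. First, $a_{n+2}=a_{n+1}b_{n+1}=(a_nb_n)(a_n^{-1}b_n^{-1})=a_nb_na_n^{-1}b_n^{-1}=[a_n,b_n]$ \emph{on the nose}, not merely ``up to reordering the last two factors.'' Second, the step you flag as ``the main obstacle'' is precisely where Lemma~\ref{depth} applies: taking $w_1=a_n$, $w_2=b_n$ (and exponent $1$) gives $[a_n,b_n]=[a_nb_n,b_n]=[a_{n+1},b_n]$. Then \eqref{eq} together with Corollary~\ref{cor:gamma(a_n)=gamma(b_n)} yields
\[
\gamma(a_{n+2})=\gamma([a_{n+1},b_n])\geq\gamma(a_{n+1})+\gamma(b_n)=\gamma(a_{n+1})+\gamma(a_n).
\]
Your worry about a ``doubling'' recursion is a red herring: the bound $\gamma(a_{n+2})\geq 2\gamma(a_n)$ that comes from $[a_n,b_n]$ directly is also valid but strictly weaker (it gives base $\sqrt{2}<\varphi$); the whole point of Lemma~\ref{depth} is to upgrade $a_n$ to $a_{n+1}$ in one slot of the commutator at no cost. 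Your treatment of the ``in particular'' clause via comparison with the Fibonacci numbers is correct and matches the paper.
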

\begin{proof}
We compute
%\begin{eqnarray*}
$a_{n+2 }= [a_{n},b_{n}] \stackrel{\eqref{dep}}{=} [a_{n}b_{n}, b_{n}].$ Then it follows from \ref{eq} and  \ref{cor:gamma(a_n)=gamma(b_n)} that $\gamma(a_n) \geq  \gamma(a_{n-1}) + \gamma(a_{n-2})$. 
The estimate on $\gamma(a_n)$ follows from the fact that the golden ratio $\varphi$ is the largest root of the polynomial  $p(t)=t^2 -t-1$.
\end{proof}
Now we are ready to prove Theorem \ref{new}, the proof follows from the following proposition.
\begin{proposition}
 We have $\alpha(n) \leq C'' \cdot n^{\log_{\varphi}(2)}$, $C'' \in \mathbb{R}_{>0}$ for infinitely many $n \in \N$ and thus $\alpha \leq \log_{\varphi}(2)$.
\end{proposition}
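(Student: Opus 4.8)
The plan is to combine the three ingredients already in hand: the upper bound $\ell(a_n)\le C'\cdot 2^n$ from Lemma~\ref{lem:length a_n b_n}, the lower bound $\gamma(a_n)\ge C\cdot\varphi^n$ from Lemma~\ref{lower}, and the inequality $\alpha(\gamma(a_n))\le \ell(a_n)$ from \eqref{alpha}. First I would record that $a_n\ne e$ for every $n$, since it has positive word length by Lemma~\ref{lem:length a_n b_n}; as $\mathbb F_2$ is residually nilpotent, $m_n:=\gamma(a_n)$ is therefore a well-defined positive integer, and $m_n\ge C\cdot\varphi^n\to\infty$, so the numbers $m_n$ take infinitely many distinct values.

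Next I would feed the two size estimates into \eqref{alpha}. From $m_n\ge C\cdot\varphi^n$ we get $n\le \log_\varphi(m_n/C)$, and hence
\[
\alpha(m_n)\ \le\ \ell(a_n)\ \le\ C'\cdot 2^n\ \le\ C'\cdot 2^{\log_\varphi(m_n/C)}\ =\ C'\cdot (m_n/C)^{\log_\varphi(2)}\ =\ C''\cdot m_n^{\log_\varphi(2)},
\]
where $C'':=C'\cdot C^{-\log_\varphi(2)}$ and I have used the identity $2^{\log_\varphi(x)}=x^{\log_\varphi(2)}$. This is precisely the claimed inequality $\alpha(n)\le C''\cdot n^{\log_\varphi(2)}$ along the infinite set $\{m_n\mid n\in\N\}$.

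Finally, to pass from this to $\alpha\le\log_\varphi(2)$ I would invoke that the limit $\alpha=\lim_{n\to\infty}\frac{\log_2(\alpha(n))}{\log_2(n)}$ exists by \cite[Lemma~2.1]{Abdu}; since it exists, it coincides with its value along the subsequence $(m_n)_{n\in\N}$. Taking $\log_2$ of the displayed inequality and dividing by $\log_2(m_n)$ gives $\frac{\log_2(\alpha(m_n))}{\log_2(m_n)}\le \log_\varphi(2)+\frac{\log_2(C'')}{\log_2(m_n)}$, and since $m_n\to\infty$ the right-hand side tends to $\log_\varphi(2)$; therefore $\alpha\le\log_\varphi(2)$. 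The equivalent formulation $\alpha(n)\preceq n^{\log_\varphi(2)+\varepsilon}$ of Theorem~\ref{new} then follows from the definition of $\alpha$ together with the monotonicity of $\alpha(n)$, exactly as in \cite{Abdu}.

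I do not expect a genuinely hard step here; the only points requiring care are the bookkeeping of the constants $C,C',C''$ and the observation that the construction controls $\alpha$ only on the sparse set $\{\gamma(a_n)\}$, so the passage to the exponent $\alpha$ must route through the already-established existence of the limit rather than through a pointwise bound valid for every $n$. (A bound for all $n$ would additionally require an upper estimate on the gaps $m_{n+1}/m_n$, i.e.\ an upper bound on $\gamma(a_n)$; this is not needed for the stated proposition.)
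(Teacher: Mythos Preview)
Your proposal is correct and follows essentially the same route as the paper's own proof: you combine $\alpha(\gamma(a_n))\le\ell(a_n)$, the length bound $\ell(a_n)\le C'\cdot 2^n$, and the depth bound $\gamma(a_n)\ge C\cdot\varphi^n$, solve for $n$ in terms of $\gamma(a_n)$, and substitute. Your write-up is in fact slightly more careful than the paper's, in that you explicitly justify why $\gamma(a_n)$ is finite (via $a_n\neq e$ and residual nilpotence) and why the set $\{\gamma(a_n)\}$ is infinite, and you spell out the passage from the subsequence bound to the conclusion $\alpha\le\log_\varphi(2)$ via existence of the limit.
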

\begin{proof}
 
 By Lemma \ref{lower}, we get
$n \leq \frac{\log_2(\gamma(a_n)) - \log_2(C)}{\log_2 (\varphi)}$ and hence
\begin{eqnarray*}\alpha(\gamma(a_n)) &\stackrel{\eqref{alpha}}{\leq}& \ell(a_n) \\ &\leq& C' \cdot 2^{n} \\
&\leq& C' \exp\left(\frac{\log(2) \cdot (\log_2(\gamma(a_n)) - \log_2(C))}{\log_2 (\varphi)} \right) \\
&=& C' \exp\left(\frac{-\log(2)\log_2(C)}{\log_2 (\varphi)} \right) \cdot (\gamma(a_n))^{\log_{\varphi}(2)}.
\end{eqnarray*}
This proves the claim.
\end{proof}
%\begin{remark}\label{rem4}
For any group $G$ and a word $w \in \mathbb{F}_2$, the word map $w:G \times G \rightarrow G$ is a natural map which is given by evaluation. We say that $w$ is a law for $G$ if the image of the corresponding word map is the identity element, i.e., $w(g,h)=1$ for all $g, h \in G$. By the method of Khalid Bou-Rabee \cite{MR2851069} and  \cite[Theorem 2.1]{Abdu} A Thom  \cite{Thom} proved that for $n \in \mathbb{N}$ there exists a word $w_n \in \mathbb{F}_2$ of length bounded by $O(\log(n)^{1.4411})$ which is a law for all nilpotent group of size at most $n$. Using the new upper bound of $\alpha$ in Theorem \ref{new}, we can improve this little bit:

\begin{corollary}\label{cor1}
Let $n \in \mathbb{N}$. There exists $w_n \in \mathbb{F}_2$ of length bound by $O(\log(n)^{\log_{\varphi}(2)})$ which is is a law for all nilpotent group of size at most $n$.
\end{corollary}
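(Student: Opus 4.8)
The plan is to combine the growth estimates of Lemma~\ref{lower} and Lemma~\ref{lem:length a_n b_n} with the elementary observation that a finite nilpotent group cannot have large nilpotency class compared to its order. First I would record that if $G$ is a nilpotent group with $|G|\le n$, then its nilpotency class $c(G)$ satisfies $c(G)\le\log_2 n$: the lower central series $G=\gamma_1(G)>\gamma_2(G)>\dots>\gamma_{c(G)+1}(G)=\{e\}$ is strictly decreasing, so each of the $c(G)$ successive indices $[\gamma_i(G):\gamma_{i+1}(G)]$ is at least $2$, whence $2^{c(G)}\le|G|\le n$.

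Next I would observe that any non-trivial $w\in\gamma_m(\mathbb F_2)$ with $m>\log_2 n$ is automatically a law for every nilpotent group $G$ with $|G|\le n$: the word map $w\colon G\times G\to G$ is the evaluation of the homomorphism $\mathbb F_2\to G$, $a\mapsto g$, $b\mapsto h$, at $w$, and any homomorphism carries $\gamma_m(\mathbb F_2)$ into $\gamma_m(G)$; since $m\ge\lfloor\log_2 n\rfloor+1\ge c(G)+1$ we get $\gamma_m(G)\subseteq\gamma_{c(G)+1}(G)=\{e\}$, so $w(g,h)=e$ for all $g,h$. Thus the task reduces to producing a short non-trivial element lying deep in $\gamma_\bullet(\mathbb F_2)$, which is precisely what the construction of Section~\ref{construction} supplies.

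I would then set $w_n:=a_k$, where $k=k(n)$ is chosen minimal with $\gamma(a_k)>\log_2 n$. By Lemma~\ref{lower} there is a constant $C>0$ with $\gamma(a_k)\ge C\varphi^k$, so $C\varphi^k>\log_2 n$ holds once $k\ge\log_{\varphi}(\log_2 n)-\log_{\varphi}(C)$; hence $k\le\log_{\varphi}(\log_2 n)+O(1)$. Since $a_k$ is a non-empty reduced word by Lemma~\ref{lem:length a_n b_n}, we have $w_n\ne e$, so $w_n$ is a genuine law by the previous paragraph, and Lemma~\ref{lem:length a_n b_n} gives $\ell(w_n)=\ell(a_k)\le C'\cdot 2^{k}\le C'\cdot 2^{\log_{\varphi}(\log_2 n)+O(1)}=O\bigl((\log_2 n)^{\log_{\varphi}(2)}\bigr)=O\bigl(\log(n)^{\log_{\varphi}(2)}\bigr)$, using the identity $2^{\log_{\varphi}t}=t^{\log_{\varphi}2}$.

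I do not expect a serious obstacle; the corollary is essentially a packaging of Theorem~\ref{new} in the style of Bou-Rabee \cite{MR2851069} and \cite[Theorem~2.1]{Abdu}. The only points requiring (minor) care are the class-versus-order inequality above, the treatment of small $n$ (for $n\le 2$ one may simply take $w_n=[a,b]$, or note the statement is vacuous), and bookkeeping of the multiplicative constants hidden in the $O$-estimates so that the final exponent is exactly $\log_{\varphi}(2)$ rather than something larger.
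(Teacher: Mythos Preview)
Your proposal is correct and is exactly the argument the paper has in mind: the paper does not give an independent proof of Corollary~\ref{cor1} but simply says that, by the method of Bou-Rabee~\cite{MR2851069} and \cite[Theorem~2.1]{Abdu} as in Thom~\cite{Thom}, the new bound $\alpha\le\log_{\varphi}(2)$ of Theorem~\ref{new} immediately upgrades the exponent. Your write-up unpacks precisely that method (nilpotency class $\le\log_2 n$, hence any non-trivial $w\in\gamma_m(\mathbb F_2)$ with $m>\log_2 n$ is a law, then plug in $w_n=a_k$ with $k$ chosen via Lemmas~\ref{lower} and~\ref{lem:length a_n b_n}), so there is no difference in approach.
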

Moreover, following the method of A. Thom \cite{Thom}, we can improve his result \cite[Proposition 3.2]{Thom} on solvable groups little bit:

\begin{corollary}\label{cor2}
Let $n \in \mathbb{N}$. There exists $w_n \in \mathbb{F}_2$ of length bound by $O((\log(n))^{(\log_{\varphi}(2) +2.890457)})$ which is a law for all solvable groups of size at most $n$.
\end{corollary}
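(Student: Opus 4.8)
The plan is to run the argument of \cite[Proposition 3.2]{Thom} essentially unchanged, feeding into it the improved exponent from Theorem \ref{new} and words supplied by the construction of Section \ref{construction} in place of the ones used there. The skeleton of that argument is as follows. Let $G$ be a finite solvable group with $|G|\le n$ and let $d=d(G)$ be its derived length; since $G^{(d)}=\{e\}$, every element of the $d$-th derived subgroup $\mathbb{F}_2^{(d)}$ maps to $e$ under any homomorphism $\mathbb{F}_2\to G$ and is therefore a law for $G$. The classical estimate that a finite solvable group of derived length $d$ has order at least doubly exponential in $d$ bounds $d(G)$ by a constant multiple of $\log_2\log_2 n$ (up to an additive constant); this estimate — untouched by the present paper — is what is responsible for the summand $2.890457$ in the exponent. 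So it suffices to produce, for every $k$, a non-trivial word in $\mathbb{F}_2^{(k)}$ whose length is controlled by the bound of Theorem \ref{new}, and then to let $w_n$ be such a word for $k$ equal to the derived-length bound just described.

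The one thing that has to be checked is that the words of the new construction already sit deep in the derived series: I claim $a_{2k}\in\mathbb{F}_2^{(k)}\setminus\{e\}$, and likewise $a_{2k-1}\in\mathbb{F}_2^{(k)}$, for all $k\ge 1$. Non-triviality is automatic since $\mathbb{F}_2$ is free. For the membership, put $e_n:=\max\{j\mid a_n\in\mathbb{F}_2^{(j)}\}$. As in the proof of Corollary \ref{cor:gamma(a_n)=gamma(b_n)}, Lemma \ref{lem:rel a_n b_n} shows that $a_n$ and $b_n$ lie in exactly the same characteristic subgroups of $\mathbb{F}_2$; in particular $b_n\in\mathbb{F}_2^{(e_n)}$. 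By the computation in the proof of Lemma \ref{lower}, $a_{n+2}=[a_n,b_n]\in[\mathbb{F}_2^{(e_n)},\mathbb{F}_2^{(e_n)}]=\mathbb{F}_2^{(e_n+1)}$, so $e_{n+2}\ge e_n+1$. Combined with $e_0=0$ (because $a_0=b^{-1}$) and $e_1\ge 1$ (because $a_1=b^{-1}aba^{-1}=[b^{-1},a]\in\mathbb{F}_2'$), induction gives $e_{2k}\ge k$ and $e_{2k-1}\ge k$, as claimed. By Lemma \ref{lem:length a_n b_n} such an element has length at most $C'\cdot 2^{2k}$.

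Running Thom's argument with $w_n:=a_{2k}$ (or the slightly shorter $a_{2k-1}$), where $k$ is the bound on the derived length of finite solvable groups of order at most $n$ from the first paragraph, then shows that $w_n$ is a law for every such group, and converting the length estimate $\ell(a_{2k})\le C'\cdot 2^{2k}$ through Theorem \ref{new} exactly as \cite[Proposition 3.2]{Thom} does — where the value $1.4411$ is now replaced by $\log_\varphi(2)$ — yields $\ell(w_n)=O\big((\log n)^{\log_\varphi(2)+2.890457}\big)$. The main obstacle is the bookkeeping that produces the precise additive constant $2.890457$: this needs the sharpest available form of the estimate on how small a finite solvable group of a given derived length can be, together with a careful tracking of constants when that estimate is composed, inside Thom's construction of laws, with the length bound of Theorem \ref{new}. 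None of that accounting changes here; the present improvement enters only through the single substitution of $\log_\varphi(2)$ for $1.4411$.
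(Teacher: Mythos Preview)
Your argument rests on the claim that a finite solvable group of derived length $d$ has order at least doubly exponential in $d$, so that $d(G)=O(\log\log n)$ when $|G|\le n$. That estimate is classical for $p$-groups (Hall), but it is \emph{false} for general solvable groups: the chain $S_3,\ S_4,\ GL(2,3),\dots$ already shows solvable groups of derived length $2,3,4$ with orders $6,24,48$, and in general one only has $d(G)\le\log_2|G|$, which is essentially sharp. Feeding $k\asymp\log n$ into your bound $\ell(a_{2k})\le C'\cdot 4^{k}$ gives a word of length polynomial in $n$, not polylogarithmic, so the argument collapses.

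There is also an internal inconsistency: your route through the derived series never invokes Theorem~\ref{new}. The only datum you use about the words is $\ell(a_{2k})=O(4^k)$, which is independent of their depth in the lower central series; so even if the derived-length bound held, nothing in your computation would produce the summand $\log_\varphi(2)$ in the exponent. The additive shape $\log_\varphi(2)+2.890457$ of the target exponent already signals that Thom's argument in \cite[Proposition~3.2]{Thom} uses the \emph{nilpotent} law from Corollary~\ref{cor1} as a black box --- that is where $\alpha$ and hence $\log_\varphi(2)$ enter --- while the $2.890457$ comes from an entirely separate part of his construction that the present paper leaves untouched. The paper's ``proof'' here is exactly that substitution and nothing more; to match it you should simply cite \cite[Proposition~3.2]{Thom} and note that its dependence on $\alpha$ now yields $\log_\varphi(2)$ in place of $1.4411$, rather than attempt an independent derived-series argument.
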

%\end{remark}

\begin{remark}\label{rem1}

We have $a_0=b^{-1} \in \gamma_1(\mathbb{F}_2)$, $a_1=[b^{-1},a] \in \gamma_2(\mathbb{F}_2)$, $a_2=[b^{-1},aba^{-1}]=[b^{-1},a][b,a]=[[b^{-1},a],b] \in \gamma_3(\mathbb{F}_2)$, and $a_3=[a_1,b_1]=[[b^{-1},a],[b,a]]\stackrel{\eqref{dep}}{=}[[b^{-1},a][b,a],[b,a]]=[[b^{-1},a],b],[b,a]]  \in \gamma_5(\mathbb{F}_2)$. So according to the construction $\gamma(a_n) \geq \gamma(a_{n-1})+\gamma(a_{n-2})$ you can easily see that $a_n \in \gamma_{f_{n+2}}(\mathbb{F}_2)$, where $f_m$ is the $m$-th Fibonacci number and is given by formula $f_m=\frac{\varphi^m-(-\varphi)^m}{\sqrt{5}}, m \in \mathbb{N}$.
\end{remark}
In the following two remarks we show the length of the shortest element in some central  series of this new construction which suggests the conjecture \ref{conj}. 
\begin{remark}\label{rem2}
We have:
\begin{enumerate}
\item $\min\{\ell (w): w\in \gamma_1(\mathbb{F}_2) \setminus \{1\} \}=1$.
\item  $\min\{\ell (w): w\in \gamma_2(\mathbb{F}_2) \setminus \{1\} \}=4$. Indeed $w=a^{n_1}b^{m_1}...a^{n_k}b^{m_k} \in \gamma_2(\mathbb{F}_2)$ if and only if $\sum_i{n_i} = \sum_i{m_i}=0$. So, the shortest non-trivial element that satisfies this condition is of length $4$.
\item\label{short} $\min\{\ell (w): w\in \gamma_3(\mathbb{F}_2 \setminus \{1\} \}=8$. Indeed if we consider any element of length $6$ in  $\gamma_2(\mathbb{F}_2)$ (which are finitely many elements) $[b^{-1},a][a,b]=[b^{-1},a][b,a][a,b]^2=[[b^{-1},a],b]][a,b]^2 \in \gamma_2(\mathbb{F}_2) \setminus \gamma_3(\mathbb{F}_2)$ since $[a,b]^2 \in \gamma_2(\mathbb{F}_2)\setminus \gamma_3(\mathbb{F}_2)$. Also if we consider the quotient $\gamma_2(\mathbb{F}_2)/ \gamma_3(\mathbb{F}_2)$ (which is free abelian group of finite rank, see from example \cite{quotient}, and \cite{lyndonschupp2}), then one can easily see that $[a,b]^2$ is the only factor of $[[b^{-1},a],b]][a,b]^2$ that survives in this quotient, hence  $[[b^{-1},a],b]][a,b]^2 \notin \gamma_3(\mathbb{F}_2)$.
 \end{enumerate}
\end{remark}
\begin{remark}\label{rem3}
It follows from Lemma \ref{depth} that the word $a_4=[[b^{-1},a][b,a],[a,b^{-1}][a,b]] \in \gamma_8(\mathbb{F}_2), \ell(a_4)=30$ and this suggests asymptotic behaviour of the form $O(n^{\nu})$, where $\nu=0.6113.. $. For the word $w=[[b^{-1},a][a,b],[a,b^{-1}][b,a]]$, we have that $\ell(w)=28 < \ell(a_4)=30$ but it follows from Lemma \ref{depth} and Remark \ref{rem2}, (\ref{short}) that $w \in \gamma_7(\mathbb{F}_2)$ since $\ell([b^{-1},a][a,b])=6$ (so $[b^{-1},a][a,b] \in \gamma_2(\mathbb{F}_2) \setminus \gamma_3(\mathbb{F}_2)$) and  $[a,b^{-1}][b,a]=[a,b]([b,a][a,b^{-1}])[b,a] \in \gamma_2(\mathbb{F}_2) \setminus \gamma_3(\mathbb{F}_2)$. Thus the word $w=[[b^{-1},a][a,b],[a,b^{-1}][b,a]]$ suggests the asymptotic behaviour of the form $O(n^{\mu})$, where $\mu=0.583..< \nu$.
\end{remark}

\begin{remark}
We can also consider this construction, we set $a'_0=a, \  b'_0=b,$ and define recursively:
$$a'_n=a'_{n-1}b'_{n-1}, \ \ b'_n=a'^{-1}_{n-1}b'^{-1}_{n-1}.$$ %\ \text{and} \ w'_n=[a'_n,b'_n].$$
You can easily see that the products here involve no cancellations and $\ell(a'_n)=2\ell(a'_{n-1})$ and also $\gamma(a'_n) \geq \gamma(a'_{n-1}) +\gamma(a'_{n-2})$ hence this construction suggests the asymptotic behaviour of the form $O(n^{\log_{\varphi}(2)})$.
\end{remark}

\begin{remark} For a subgroup $\Gamma \subset \mathbb{F}_2$, we define $\text{girth}(\Gamma):=\min\{\ell(w)|w\in \Gamma \setminus \{e\} \}$. The Author and A. Thom \cite{Abdu} proved that for $\Gamma \subset \mathbb{F}_2$ is a normal subgroup. Then the following holds: ${\rm girth}([\Gamma,\Gamma])\geq3.{ \rm girth}(\Gamma).$
We have $a_3=[[b^{-1},a],b],[a,b]], \ell(w_1)=14$, then ${\rm girth}([\gamma_3(\mathbb{F}_2),\gamma_2(\mathbb{F}_2)) \leq 14$. It follows from Remark \ref{rem2} that ${\rm girth}(\gamma_3(\mathbb{F}_2))=8, \  \text{and}  \ {\rm girth}(\gamma_2(\mathbb{F}_2))=4$, then we can easily see for two different normal subgroups that
$${\rm girth}([\gamma_3(\mathbb{F}_2),\gamma_2(\mathbb{F}_2)]) \leq 14 < 2.{\rm girth}(\gamma_3(\mathbb{F}_2))$$
and
$${\rm girth}([\gamma_3(\mathbb{F}_2),\gamma_2(\mathbb{F}_2)]) \leq 14 > 3.{\rm girth}(\gamma_2(\mathbb{F}_2)).$$
\end{remark}

%\section{Some applications}\label{app}

%\begin{theorem} We have
%$\exp(n^{\delta}) \preceq  F^{\rm nil}_{\mathbb F_2}(n)$ with
%with $$\delta =\log_2(\varphi)= 0,69491... .$$
%\end{theorem}
%\begin{proof}
%The proof is identical to the proof of Claim 1 on page 705 of \cite{MR2851069}. \end{proof}
\section{Almost laws for compact groups}\label{almost}

 Consider the word map on ${\rm SU}(k)$ for $w \in \mathbb{F}_2$ where ${\rm SU}(k)$ is the special unitary group. A. Thom \cite{MR3043070} proved that there exists a sequence of nontrivial  elements $(w_n)$ in $\mathbb{F}_2$, such that for every neighborhood $U \subset {\rm SU}(k)$ of identity, there exists $N \in \mathbb{N}$ such that $w_n({\rm SU}(k) \times {\rm SU}(k)) \subset U$ for all $n \geq N$. This sequence of words is called almost law for compact groups see, \cite{breu} for more details. For $u,v \in {\rm SU}(k)$, there is a metric $d(u,v)  := \|u-v\|$ where $\|.\|$ is the operator norm. We set  $L_k(w) := \max \{ d(1_k,w(u,v)) \mid u,v \in {\rm SU}(k) \}.$ The author and A. Thom \cite{Abdu} proved that there exists an almost law $(w_n)$ for ${\rm SU}(k)$ such that there exists a constant $C>0$ depending on $k$ such that
$L_k(w_n)  \leq \exp\left( -C \cdot \ell(w_n)^{\delta} \right)$
with $\delta = \frac{\log_2(1 + \sqrt{2})}{\log_2(3+\sqrt{17}) - 1}= 0.69391... .$
Using this new construction we can improve this little bit.

\begin{theorem} \label{almost} Let $k \in \N$. There exists an almost law $(w_n)_n$ for ${\rm SU}(k)$ such that the following holds
there exists a constant $C>0$ such that
$$L_k(w_n)  \leq \exp\left( -C \cdot \ell(w_n)^{\log_2 (\varphi)}\right)$$
where $\varphi$ is the golden ratio.
\end{theorem}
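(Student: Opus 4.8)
The plan is to reuse the words $a_n$ from the new construction of Section~\ref{construction} and to transport the quantitative estimate on $\gamma(a_n)$ and $\ell(a_n)$ through the Taylor-expansion argument of \cite{Abdu} for word maps on $\mathrm{SU}(k)$. Recall from \cite{Abdu}, \cite{MR3043070} that if $w\in\gamma_m(\mathbb F_2)$ then the first $m-1$ terms of the Taylor expansion of $w(e^{tX},e^{tY})$ around $t=0$ vanish, so that one gets a bound of the shape $d(1_k,w(u,v))\leq C_k^{\ell(w)}\cdot r^{\gamma(w)}$ for suitable $0<r<1$ (depending on $k$) whenever $u,v$ lie in a fixed small neighbourhood of $1_k$; a compactness/scaling argument then promotes this to a bound valid on all of $\mathrm{SU}(k)\times\mathrm{SU}(k)$, at the cost of possibly enlarging the base of the exponential in $\ell(w)$. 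I would cite the relevant proposition from \cite{Abdu} verbatim rather than redo it, since the only new input here is the improved relation between $\gamma$ and $\ell$.

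First I would set $w_n:=a_n$ and invoke Lemma~\ref{lem:length a_n b_n} to get $\ell(w_n)\leq C'\cdot 2^n$ and Lemma~\ref{lower} to get $\gamma(w_n)\geq C\cdot\varphi^n$ for absolute constants $C,C'>0$. Next I would quote the estimate from \cite{Abdu}: there are constants $A_k>1$ and $0<r_k<1$ with $L_k(w)\leq A_k^{\ell(w)}\, r_k^{\gamma(w)}$ for every $w\in\mathbb F_2$. Substituting the two displayed bounds yields
$$
L_k(w_n)\leq A_k^{C'\cdot 2^n}\, r_k^{C\cdot \varphi^n}
=\exp\!\left(C' 2^n\log A_k - C\varphi^n\log(1/r_k)\right).
$$
Now comes the bookkeeping that makes the statement work: from $\ell(w_n)\leq C' 2^n$ we have $2^n\geq \ell(w_n)/C'$, hence $\varphi^n=(2^n)^{\log_2\varphi}\geq (\ell(w_n)/C')^{\log_2\varphi}$, while the dominant term to be controlled is $\varphi^n\log(1/r_k)$; since $\varphi^n$ grows polynomially faster is not quite right — rather, $2^n$ and $\varphi^n$ are comparable powers of each other, so I must be a little careful and instead express everything in terms of $n$, then eliminate $n$ at the end. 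Concretely, for $n$ large the bracket is $\leq -\tfrac{C}{2}\varphi^n\log(1/r_k)$ because $\varphi^n$ and $2^n$ are both $\exp(\Theta(n))$ but with $\varphi<2$, so this step actually needs the reverse inequality $2^n\leq (\text{const})\cdot\ell(w_n)$, i.e. a lower bound on $\ell(w_n)$ of the form $\ell(w_n)\geq c\cdot 2^n$, which is equally immediate from Lemma~\ref{lem:length a_n b_n} (the explicit formulas give $\ell(a_n)\sim \tfrac{13}{7}2^n$). With $c\cdot 2^n\leq \ell(w_n)\leq C'\cdot 2^n$ in hand, one has $\varphi^n\geq (\ell(w_n)/C')^{\log_2\varphi}$ and the exponential error term from $\ell(w_n)$, namely $C' 2^n\log A_k\leq (\log A_k)\ell(w_n)/c$, is linear in $\ell(w_n)$, hence is dominated by $\varphi^n\sim \ell(w_n)^{\log_2\varphi}$ as $\log_2\varphi>1$... wait, $\log_2\varphi\approx 0.694<1$, so $\ell(w_n)^{\log_2\varphi}$ is \emph{smaller} than the linear term, which is the real obstacle: one cannot absorb a linear-in-$\ell(w_n)$ positive contribution into a sublinear negative one.

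The resolution — and the step I expect to be the main obstacle, so I would handle it carefully — is that the bound $L_k(w)\leq A_k^{\ell(w)}r_k^{\gamma(w)}$ is \emph{not} the right form; the argument in \cite{MR3043070, Abdu} actually produces, after the scaling step, an estimate with \emph{no} exponential loss in $\ell(w)$ in the exponent of the small factor but only a polynomial one, of the shape $L_k(w)\leq P_k(\ell(w))\cdot r_k^{\gamma(w)}$ with $P_k$ a polynomial, \emph{or} one simply works on a fixed neighbourhood where the factor $A_k^{\ell(w)}$ does not appear and uses homogeneity of $\mathrm{SU}(k)$. I would therefore cite the precise statement of \cite[Theorem/Proposition on $\mathrm{SU}(k)$]{Abdu} which already has $\delta=\log_2(1+\sqrt2)/(\log_2(3+\sqrt{17})-1)$ as the ratio $\log\ell / \log\gamma$ coming from $\alpha$, and observe that its proof only uses the inequality $\ell(w_n)\preceq \gamma(w_n)^{1/\log_\varphi 2}$, i.e. $\gamma(w_n)\succeq \ell(w_n)^{\log_\varphi 2}$, wait again: $\log_\varphi 2\approx 1.44>1$ so $\gamma\succeq\ell^{1.44}$, giving $r_k^{\gamma(w_n)}\leq \exp(-C\ell(w_n)^{\log_\varphi 2}\log(1/r_k))$ and hence $L_k(w_n)\leq\exp(-C'\ell(w_n)^{\log_\varphi 2})$ — but the theorem asks for the exponent $\log_2\varphi\approx 0.694$, which is exactly $1/\log_\varphi 2$, reflecting that the old $\delta$ was $(1/\alpha_{\mathrm{old}})$ with $\alpha_{\mathrm{old}}=1.4411$ and the new one is $1/\log_\varphi 2$. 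So the clean plan is: take $w_n=a_n$; from Lemmas~\ref{lem:length a_n b_n} and \ref{lower} derive $\gamma(w_n)\geq C\cdot \ell(w_n)^{1/\log_\varphi 2}=C\cdot\ell(w_n)^{\log_2\varphi}$; feed this single inequality into the unchanged estimate $L_k(w)\leq \exp(-c_k\gamma(w))$ valid for the construction (as established in \cite{Abdu}, whose proof is insensitive to \emph{which} sequence realizing a good $\gamma$-vs-$\ell$ ratio one plugs in); conclude $L_k(w_n)\leq\exp(-C\ell(w_n)^{\log_2\varphi})$, and note $(w_n)$ is an almost law since $\gamma(w_n)\to\infty$. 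The only genuine work is quoting \cite{Abdu} in a form that isolates the hypothesis ``$\gamma(w_n)\gtrsim \ell(w_n)^{\theta}$'' and then substituting $\theta=\log_2\varphi$ in place of the old $0.69391\ldots$; I would make sure the cited lemma is stated with that level of generality, or else reproduce its two-line scaling argument in this generality.
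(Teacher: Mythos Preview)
Your route through $\gamma(a_n)$ is not how the paper proceeds, and the black box you want to cite does not exist. The paper never uses the depth $\gamma(w_n)$ in the proof of this theorem at all. Instead it works directly on $\mathrm{SU}(k)$ via the elementary contraction inequality $\|1-[u_1,u_2]\|\leq 2\|1-u_1\|\,\|1-u_2\|$ from \cite[Lemma~2.1]{MR3043070}. It first invokes \cite[Corollary~3.3]{MR3043070} to pick words $w,v\in\mathbb F_2$ generating a free subgroup with $L_k(w),L_k(v)\leq\tfrac13$, then sets $w_n:=a_n(w,v)$ (substitution, not $w_n=a_n$). Because $a_{n+2}$ is a commutator of level-$(n+1)$ and level-$n$ words, the contraction inequality yields a Fibonacci-type recursion $-\log(2L_k(w_n))\geq -\log(2L_k(w_{n-1}))-\log(2L_k(w_{n-2}))$, whose solution gives $-\log(2L_k(w_n))\geq D\varphi^n$. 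Combining with $\ell(w_n)\leq C''2^n$ and eliminating $n$ gives the stated bound.

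The gap in your plan is the assertion that \cite{Abdu} supplies an estimate of the form $L_k(w)\leq\exp(-c_k\gamma(w))$ ``whose proof is insensitive to which sequence one plugs in''. It does not: the almost-law argument in \cite{Abdu} is exactly the same construction-specific commutator recursion as here, just run on the earlier sequence. There is no general inequality bounding $L_k(w)$ purely in terms of $\gamma(w)$; as you yourself noticed, any attempt to derive one from Taylor expansion picks up a factor like $A_k^{\ell(w)}$ that swamps the sublinear exponent $\ell(w)^{\log_2\varphi}$. The two ingredients you are missing are (i) the seed substitution $a_n\mapsto a_n(w,v)$ to make the initial values small enough that the recursion contracts, and (ii) running the recursion on $L_k$ itself rather than passing through $\gamma$. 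Once you add those, the proof is essentially the three displayed lines in the paper.
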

\begin{proof}
%Our basic method is a well-known contraction property of the commutator map in a Banach algebra. Let $k$ be fixed. In terms of the function $L_k$,
It follows from \cite[Lemma 2.1]{MR3043070}, that 
\begin{equation}\label{contact}
\|1-u_1u_2\| \leq 2\|1-u_1\| \|1-u_2\|, \ \text{for } u_1, u_2  \in {\rm SU}(k).
\end{equation}
%\begin{equation} \label{zassenhaus}
%L_k([w,v]) \leq 2 \cdot L_k(w)L_k(v).
%\end{equation} 
By \cite[Corollary 3.3.]{MR3043070}  there exist words $w,v \in \mathbb F_2$ which generate a free subgroup and satisfy $L_k(w),L_k(v) \leq \frac13$. Let us set $w_n:=a_n(w,v)$. It follows from Lemma \ref{depth} that 
\begin{equation}  \label{length}
\ell(w_n) \leq C'' \cdot 2^n
\end{equation} 

for some constant $C''>0$. On the other side, Equation \eqref{contact} and the equation $a_{n+2} = [a_{n+1},a_{n-1}]$ show that $L_k(w_n) \leq 2 \cdot L_k(w_{n-1}) L_k(w_{n-2})$ or equivalently
$$- \log(2L_k(w_n)) \geq - \log(2L_k(w_{n-1})) - \log(2L_k(w_{n-2})).$$
Thus there exists a constant $D>0$ (as in the proof of Lemma \ref{lower}) such that
\begin{equation} \label{small}
- \log (2L_k(w_n)) \geq D \cdot \varphi^n,
\end{equation}
for some constant $D>0$. Hence,
$$L_k(w_n) \stackrel{\eqref{small}}{\leq} \frac12 \exp\left(- D \cdot \varphi^n \right) \stackrel{\eqref{length}}{\leq} \exp\left(- C  \cdot \ell(w_n)^{\log_2 (\varphi)} \right)$$
for some constant $C$. This proves the claim.\end{proof}

%It would be interesting to find a more direct relationship between the growth of the girth of the lower central series and the asymptotics encountered in Theorem \ref{almost}. It is presently unclear if $1 + \varepsilon$ for any $\varepsilon>0$ (or even for $\varepsilon=0$) is enough in Theorem \ref{almost}, see also Section 5.4 in \cite{breu} for a discussion of this question.

\section*{Acknowledgments}
We want to thank Andreas Thom and Jakob Schneider for interesting comments and the MPI-MIS Leipzig for support and an excellent research environment.

\begin{bibdiv}
\begin{biblist}

\bib{breu}{article}{
   author={Menny Aka},
   author={Emmanuel Breuillard},
   author={Lior Rosenzweig},
   author={Nicolas de Saxc\'e},
   title={Diophantine properties of nilpotent Lie groups},
   journal={Compositio Mathematica},
   volume={151},
      date={2015},
      number={6},
      pages={1157--118},
}

\bib{MR2851069}{article}{
   author={Bou-Rabee, Khalid},
   title={Approximating a group by its solvable quotients},
   journal={New York J. Math.},
   volume={17},
   date={2011},
   pages={699--712},
   
}

%\bib{MR2583614}{article}{
   %author={Buskin, Nikolai Vladislavovich},
   %title={Efficient separability in free groups},
  % language={Russian, with Russian summary},
  % journal={Sibirsk. Mat. Zh.},
%   volume={50},
%   date={2009},
 %  number={4},
 %  pages={765--771},
 %  translation={
 %     journal={Sib. Math. J.},
%      volume={50},
     % date={2009},
   %   number={4},
    %  pages={603--608},
  % },
%}

%\bib{fox}{article}{
 %  author={Fox, Ralph H.},
%   title={Free differential calculus. I. Derivation in the free group ring},
 %  journal={Ann. of Math. (2)},
%   volume={57},
%   date={1953},
%   pages={547--560},
%}

%\bib{MR2784792}{article}{
  % author={Kassabov, Martin},
   %author={Matucci, Francesco},
  % title={Bounding the residual finiteness of free groups},
  % journal={Proc. Amer. Math. Soc.},
 %  volume={139},
 %  date={2011},
 %  number={7},
  % pages={2281--2286},
%}
%\bib{levi1}{article}{
%   author={Levi, Friedrich},
 %  title={\"Uber die Untergruppen freier Gruppen I},
 %  journal={Math. Z.},
 %  volume={32},
%   date={1930},
  % number={1},
  % pages={315--318},
%}

%\bib{levi2}{article}{
%   author={Levi, Friedrich},
  % title={\"Uber die Untergruppen der freien Gruppen II},
  % journal={Math. Z.},
  % volume={37},
  % date={1933},
 %  number={1},
   %pages={90--97},
%}

\bib{lyndonschupp}{book}{
   author={Lyndon, Roger C.},
   author={Schupp, Paul E.},
   title={Combinatorial group theory},
   note={Ergebnisse der Mathematik und ihrer Grenzgebiete, Band 89},
   publisher={Springer-Verlag},
   place={Berlin},
   date={1977},
   pages={xiv+339},
 }
\bib{lyndonschupp2}{book}{
   author={Magnus},
   author={Karrass},
   author={Soltar},
   title={Combinatorial group theory},
   %note={Ergebnisse der Mathematik und ihrer Grenzgebiete, Band 89},
   publisher={John Wiley and Sons, Inc},
   place={New York},
   date={1966},
   pages={xii+444},
 }
\bib{MR2737679}{article}{
   author={Malestein, Justin},
   author={Putman, Andrew},
   title={On the self-intersections of curves deep in the lower central
   series of a surface group},
   journal={Geom. Dedicata},
   volume={149},
   date={2010},
   pages={73--84},
}

%\bib{MR2970452}{article}{
%   author={Rivin, Igor},
%   title={Geodesics with one self-intersection, and other stories},
 %  journal={Adv. Math.},
 %  volume={231},
 %  date={2012},
 %  number={5},
  % pages={2391--2412},
%}

\bib{MR3043070}{article}{
   author={Thom, Andreas},
   title={Convergent sequences in discrete groups},
   journal={Canad. Math. Bull.},
   volume={56},
   date={2013},
   number={2},
   pages={424--433},
}

\bib{Abdu}{article}{
   author={Elkasapy, Abdelrhman}
   author={Thom, Andreas},
   title={On the length of the shortest non-trivial element in
the derived and the lower central series},
   journal={J. Group theory},
   volume={18},
   date={2015},
   number={7},
   pages={793--804},
}

\bib{Thom}{article}{
   author={Thom, Andreas},
   title={About the length of laws for finite groups},
   journal={arXiv preprint
arXiv:1508.07730; to appear in Israel Journal of Mathematics (2015)},
 }

\bib{quotient}{article}{
  author={Anthony M. Gagalione},
  title={Factor groups of the lower central series for special products},
  journal={Journal of Algebra},
  volume={37},
  date={1975},
  pages={172--185},
}

%\bib{Dan}{book}{
  % author={Dan Segal},
   %author={Schupp, Paul E.},
 %  title={Words},
   %note={Ergebnisse der Mathematik und ihrer Grenzgebiete, Band 89},
  % publisher={Cambridge University Press},
 %  place={United Kingdom},
%   date={2009},
 %  pages={121},
% }

\end{biblist}
\end{bibdiv} 

\end{document}